\documentclass[11pt,a4paper]{article}
\usepackage[T1]{fontenc}
\usepackage[utf8]{inputenc}
\usepackage{lmodern}
\usepackage[margin=24mm]{geometry}
\usepackage{amsmath,amssymb,amsthm,mathtools}
\usepackage{booktabs,array,needspace}
\usepackage{microtype}
\usepackage{hyperref}
\hypersetup{colorlinks=true,allcolors=black,
  pdftitle={One Extra Dimension Suffices for the Complete-Graph Squared-Stress},
  pdfauthor={Lilin Yan and Hongwei Zhao}}
\allowdisplaybreaks[2]
\numberwithin{equation}{section}
\newcommand{\funding}[1]{#1}
\newcommand{\email}[1]{\href{mailto:#1}{\nolinkurl{#1}}}
\newtheorem{theorem}{Theorem}[section]
\newtheorem{lemma}[theorem]{Lemma}
\newtheorem{corollary}[theorem]{Corollary}

\theoremstyle{remark}
\newtheorem{remark}[theorem]{Remark}
\newcommand{\R}{\mathbb{R}}
\newcommand{\one}{\mathbf{1}}
\newcommand{\diag}{\operatorname{diag}}
\newcommand{\Diag}{\operatorname{Diag}}

\newcommand{\norm}[1]{\lVert #1\rVert}
\title{One Extra Dimension Suffices for the\\
Complete-Graph Squared-Stress\thanks{\funding{This work was supported by the Key Research and Development Program of Shaanxi Province, China, under grant 2026CY-YBXM-044.}}}
\author{Lilin Yan\thanks{School of Electronics and Information, Northwestern Polytechnical University, Xi'an 710072, China (\email{yanlilin@mail.nwpu.edu.cn}).}
\and Hongwei Zhao\thanks{Corresponding author. School of Electronics and Information, Northwestern Polytechnical University, Xi'an 710072, China (\email{hongvi_zhao@126.com}).}}
\date{}

\begin{document}
\maketitle

\begin{abstract}
Let all pairwise distances of an arbitrary configuration of points in
$\R^\ell$ be known exactly. We prove that every second-order stationary
point of the complete-graph squared-stress objective over configurations
in $\R^k$ has zero residual whenever $k>\ell$. The result holds for every
number of points, without a genericity assumption, and allows repeated
points and degenerate ground-truth configurations. It resolves the
one-extra-dimension conjecture for complete, uniformly weighted data.
The proof combines positivity of the residual stress, polar normalization
of the candidate configuration, a row-wise rank-nullity argument, and
propagation of zero curvature through the full Hessian. The theorem,
including the passage from derivatives of the original quartic objective
to its matrix formulation, has been formalized in Lean~4. The accompanying
verification records include a kernel check with zero trust level and a
transitive axiom audit.
\end{abstract}

\noindent\textbf{Keywords.}
Euclidean distance geometry; squared-stress; sensor network localization;
nonconvex optimization; low-dimensional relaxation; formal verification.

\section{Introduction}

Recovering a point configuration from its pairwise distances leads to a
quartic least-squares objective, usually called the squared-stress or
$s$-stress. Even when every distance is known without noise, this
objective can have nonglobal local minima when optimized in the same
dimension as the ground truth. Such examples, and related landscape
results, are established in
\cite{Song2025,CriscitielloMcRaeRebjockBoumal2026}.

A natural relaxation increases the dimension of the candidate points.
Criscitiello, McRae, Rebjock, and Boumal
\cite{CriscitielloMcRaeRebjockBoumal2026} proved complete-data guarantees
in dimensions depending on the number of points, and conjectured that
one additional dimension is sufficient for every ground truth.
Criscitiello~\cite{Criscitiello2026} subsequently established the general
bound $k\ge 2(\ell+1)$, and proved the threshold $k=\ell+1$
when $n\le\ell+3$. The general one-extra-dimension question remained
open in those works.

The present paper proves the conjectured conclusion for all $k>\ell$.
The central argument is local to each row attaining the largest diagonal
entry of the stress matrix. A single scalar constraint is added to a
rank-nullity argument in a polar-normalized space. Positivity forces the
resulting vector to be supported on the maximum-diagonal rows. The full
Hessian then makes the off-diagonal stresses in those rows vanish, which
contradicts the positive diagonal and the zero row sums.

The proof is self-contained. A separate Lean~4 development verifies the
same argument starting from directional derivatives of the original
objective. Section~\ref{sec:formal} describes the precise formal statement,
the correspondence between the mathematical and formal definitions, and
the recorded verification commands.

\section{Problem and main result}\label{sec:statement}

Let $n,k,\ell$ be nonnegative integers and let
$P\in\R^{n\times\ell}$ have rows $p_i^\top$.
For $Z\in\R^{n\times k}$ with rows $z_i^\top$, define
\begin{equation}\label{eq:objective}
 F_P(Z)=\frac12\sum_{1\le i<j\le n}
 \left(\norm{z_i-z_j}^2-\norm{p_i-p_j}^2\right)^2.
\end{equation}
Every unordered pair occurs once and has the same weight. A
\emph{second-order stationary point} satisfies
\begin{equation}\label{eq:sosp}
 \nabla F_P(Z)=0,
 \qquad \nabla^2F_P(Z)\succeq0,
\end{equation}
where the derivatives are taken with respect to all $nk$ coordinates.

\begin{theorem}[One extra dimension]\label{thm:main}
Let $P\in\R^{n\times\ell}$ and $Z\in\R^{n\times k}$ be arbitrary.
If $k>\ell$ and $Z$ is a second-order stationary point of
\eqref{eq:objective}, then
\[
 F_P(Z)=0.
\]
Equivalently,
$\norm{z_i-z_j}^2=\norm{p_i-p_j}^2$ for every pair $i,j$.
Consequently, $Z$ is a global minimizer of $F_P$.
\end{theorem}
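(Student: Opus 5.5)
We work with the stress matrix. Put $d_{ij}=\norm{z_i-z_j}^2-\norm{p_i-p_j}^2$ and let $S\in\R^{n\times n}$ be the symmetric matrix with off-diagonal entries $S_{ij}=-d_{ij}$ and zero row sums, so $S_{ii}=\sum_{j\ne i}d_{ij}$. A direct expansion of \eqref{eq:objective} shows that $\nabla F_P(Z)=cSZ$ for a positive constant $c$. Hence first-order stationarity means $SZ=0$. The Hessian quadratic form in a direction $\dot Z$ is, up to the same constant,
\[
 \langle S\dot Z,\dot Z\rangle+\sum_{i<j}\bigl(2\langle z_i-z_j,\dot z_i-\dot z_j\rangle\bigr)^2\ \ge 0 .
\]
Writing $G=ZZ^\top$ and $H=PP^\top$, the residuals are linear in $G-H$. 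Stationarity gives $SG=0$, so $\langle S,G\rangle=0$. Summing the identity $\sum_{i<j}d_{ij}^2=-\langle S,G-H\rangle$ then yields $\langle S,H\rangle=\sum_{i<j}d_{ij}^2\ge0$, with equality exactly when $F_P(Z)=0$. This is the positivity of the residual stress. Our goal is therefore to show $\langle S,H\rangle\le 0$, or more directly to derive a contradiction from $S\neq0$.

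\textbf{Polar normalization and test directions.} We first rotate $Z$ so that its column space splits into an informative part and a trivial part. Since $\operatorname{rank}(P)\le\ell<k$, there is room for a direction orthogonal to the constraints. Take $\dot Z=u w^\top$ with $u\in\R^n$ and $w\in\R^k$ a unit vector, and choose $w$ so that $Zw$ is constrained. The natural test is $\dot Z=U$ with $U\in\R^{n\times k}$ satisfying $\langle z_i-z_j,u_i-u_j\rangle=0$ for all pairs. Any such $U$ makes the second term vanish, and then $\langle S,UU^\top\rangle\ge0$ follows.

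\textbf{Row-wise rank--nullity.} Fix a row index $m$ at which $S_{mm}$ is the largest diagonal entry. We look for $U$ whose rows are $u_i=\alpha_i w$, lying in a polar-normalized frame of $Z$. The pairwise orthogonality conditions must form a linear system whose solution space exceeds the number of constraints. The relevant count is $k$ coordinates against at most $\ell$ effective constraints, because $Z$ can be compared with $P$ via $G-H$. To this we add one scalar constraint that pins $U$ to row $m$. Rank--nullity then gives a nonzero admissible $U$.

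\textbf{Positivity and support.} Positivity of $\langle S,UU^\top\rangle$, combined with the maximality of $S_{mm}$ and the fact that $S$ has zero row sums, should force $\alpha$ to be supported on the maximum-diagonal rows. These are the rows where equality holds in a Gershgorin-type bound.

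\textbf{Full Hessian and the contradiction.} Next we return to the full Hessian and perturb $U$ by $tE$, where $E$ is an arbitrary direction supported on a maximum-diagonal row. Since the quadratic form vanishes at $U$ and is nonnegative, its bilinear form with any $E$ is zero. This zero-curvature propagation should yield $S_{ij}=0$ for every maximum-diagonal row $i$ and every $j\ne i$. Zero row sums then give $S_{ii}=0$. Because $i$ attains the maximum diagonal, every diagonal entry is nonpositive, and so $\operatorname{tr}S\le0$.

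However, $\operatorname{tr}S=\sum_i S_{ii}=2\sum_{i<j}d_{ij}$, and we still need this to be positive when $S\neq0$. We expect to obtain it from $\langle S,H\rangle>0$ together with positive semidefiniteness of the $S$-part restricted to the null directions. That restricted form is nonnegative, so $S$ is PSD on a large subspace, which gives $S_{\max}>0$ unless $S=0$. This would be the contradiction.

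The main obstacle is the rank--nullity step, and specifically making the dimension count close with exactly one spare dimension ($k\ge\ell+1$) while keeping the kernel vector nonzero on row $m$. Our first attempt is to restrict to $U$ with rows in $\operatorname{span}$ of a polar factor of $Z$ aligned with $P$. There the constraints reduce to $\ell$ equations per row, plus the one scalar normalization.
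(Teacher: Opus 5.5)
\textbf{There is a genuine gap: your test directions and your rank--nullity count do not work, and this is the step that is supposed to carry the proof.} The directions you test, $U$ with $\langle z_i-z_j,u_i-u_j\rangle=0$ for all pairs, are exactly the infinitesimal flexes of the complete-graph framework $Z$. For centered $Z$ this condition reads $ZU_c^\top+U_cZ^\top=0$ with $U_c=JU$. Once the stress is known to be semidefinite (see below), the diagonal Hessian block at an extreme-diagonal row forces $Z$ to have full column rank. Then the only solutions are $U=ZB+\one b^\top$ with $B$ skew. For these $SU=0$, so $\langle S,UU^\top\rangle=0$ identically. The family therefore carries no information, $P$ never enters it, and no count of $k$ unknowns against $\ell$ constraints exists there. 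Your rank-one family $u_i=\alpha_iw$ even reduces to translations.

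The missing idea is to do the count in $\R^n$. Write $\Sigma=-\tfrac12S$, $\sigma_i=\Sigma_{ii}$, $t=\max_i\sigma_i$, $C=Z^\top Z$, $Q=ZC^{-1/2}$. Let $v$ be a unit top eigenvector of $\Sigma$ with eigenvalue $\mu>0$, so that $\one^\top v=0=Z^\top v$. Fix a row $i$ with $\sigma_i=t$. The map $(a,c)\mapsto(P^\top(Qa+cv),\,z_i^\top a)$ goes from $\R^{k+1}$ to $\R^{\ell+1}$, so it has a nonzero kernel element. Put $u=Qa+cv$. The centered Gram identity $PP^\top=ZZ^\top+\Sigma-J\Diag(\sigma)J$ turns $\norm{P^\top u}^2=0$ into
\[
 0=a^\top(C-tI_k)a+(\mu-t)c^2+\sum_j(t-\sigma_j)u_j^2 .
\]
Using $z_i^\top a=0$, the first term is one quarter of the Hessian form of the single-row perturbation $e_ia^\top$. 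Note that this perturbation does not kill the pairwise term of the Hessian. That term contributes exactly $a^\top Ca$, which is what matches $u^\top ZZ^\top u$. For the same reason the weights must be exact, and your formula overweights the pairwise term by a factor $2$: the correct form is $2\langle SU,U\rangle+4\sum_{i<j}\langle z_i-z_j,u_i-u_j\rangle^2$. The displayed identity yields $c=0$, zero curvature of $e_ia^\top$, and support of $u$ on the maximal rows. Only then does the full-Hessian propagation you sketch have its inputs: $Ca=ta$, $z_i^\top a=0$, and $z_j^\top a=0$ off the maximal set.

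\textbf{You also never prove the positivity that drives all of this.} The needed fact is that $\Sigma\succeq0$ at a first-order point, i.e.\ $S\preceq0$ in your convention. It follows from the Gram identity and $v^\top PP^\top v\ge0$ at a bottom eigenvector, together with the strict diagonal bound for matrices with zero row sums. It is what gives $t>0$, $\mu>t$, full column rank of $Z$, and nonnegativity of every term above.

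Your scalar identity is no substitute, and its sign is wrong. In fact $\sum_{i<j}d_{ij}^2=+\langle S,ZZ^\top-PP^\top\rangle$, so $\langle S,PP^\top\rangle=-2F_P(Z)\le0$ holds automatically.

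The same sign confusion breaks your endgame. Since $S\preceq0$, $\operatorname{tr}S=2\sum_{i<j}d_{ij}\le0$ at every stationary point, so the positivity you hope for never occurs. Showing $S_{mm}=0$ at the row with the \emph{largest} $S_{mm}$ therefore contradicts nothing. You must work at the row with the most negative $S_{mm}$, i.e.\ $\sigma_m=t$. Its maximality makes the coefficients $t-\sigma_j$ nonnegative, and the contradiction is $t>0$ against $t=-\sum_{j\ne m}\Sigma_{mj}=0$.
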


No assumptions of distinctness, affine independence, genericity, or full
rank are imposed on $P$. Nor is full column rank assumed for $Z$.
The theorem concerns unrestricted candidate coordinates and exact,
complete, uniformly weighted data.

\begin{corollary}[Local minima and strict saddles]\label{cor:landscape}
Under the hypotheses on $P,n,k,\ell$ in Theorem~\ref{thm:main}, every
local minimizer has zero residual. At every stationary point with
$F_P(Z)>0$, there is a perturbation $U\in\R^{n\times k}$ such that
\[
 \left.\frac{d^2}{dt^2}F_P(Z+tU)\right|_{t=0}<0.
\]
\end{corollary}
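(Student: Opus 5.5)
The statement to prove is Corollary~\ref{cor:landscape}, and I will derive it directly from Theorem~\ref{thm:main}. The key fact is that $F_P$ is a polynomial, hence $C^\infty$, on $\R^{n\times k}$. I treat the two claims separately and fix $P$ with $k>\ell$ throughout.

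\emph{Local minimizers.} Let $Z$ be a local minimizer of $F_P$. The standard unconstrained necessary conditions then apply, because $F_P$ is twice continuously differentiable on the open set $\R^{n\times k}$.
\begin{itemize}
\item[(i)] The first-order condition gives $\nabla F_P(Z)=0$, since every directional derivative $\frac{d}{dt}F_P(Z+tU)|_{t=0}$ must vanish.
\item[(ii)] The second-order condition gives $\nabla^2F_P(Z)\succeq0$. If $\langle U,\nabla^2F_P(Z)U\rangle<0$ for some $U$, a second-order Taylor expansion along $Z+tU$ would give $F_P(Z+tU)<F_P(Z)$ for all small $t\neq0$, contradicting local minimality.
\end{itemize}
Hence $Z$ satisfies \eqref{eq:sosp}, and Theorem~\ref{thm:main} yields $F_P(Z)=0$.

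\emph{Strict saddles.} Let $Z$ be stationary with $F_P(Z)>0$. By the contrapositive of Theorem~\ref{thm:main}, $Z$ cannot be second-order stationary. Since $\nabla F_P(Z)=0$ already holds, the failing condition must be $\nabla^2F_P(Z)\succeq0$. So there is a $U\in\R^{n\times k}$ with $\langle U,\nabla^2F_P(Z)U\rangle<0$. By the chain rule,
\[
\left.\frac{d^2}{dt^2}F_P(Z+tU)\right|_{t=0}=\langle U,\nabla^2F_P(Z)U\rangle,
\]
where the Hessian is taken over all $nk$ coordinates exactly as in \eqref{eq:sosp}. This gives the required negative curvature direction.

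There is no substantial obstacle here. The only point needing care is that the Hessian in \eqref{eq:sosp} is the full coordinate Hessian, so that it matches the one-variable second derivative. This follows from smoothness of the polynomial $F_P$.
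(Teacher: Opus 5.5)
Your proposal is correct and matches the paper's proof: the second-order necessary conditions for a local minimizer of the smooth polynomial $F_P$, then the contrapositive of Theorem~\ref{thm:main}, which gives a non-positive-semidefinite Hessian and hence a direction of negative quadratic form. Your remark that the one-variable second derivative along $Z+tU$ equals the full-coordinate Hessian quadratic form spells out a step the paper leaves implicit.
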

\begin{proof}
The polynomial $F_P$ is twice continuously differentiable, so every
local minimizer satisfies \eqref{eq:sosp}. If a stationary point with
positive objective had a positive semidefinite Hessian, it would
contradict Theorem~\ref{thm:main}. A real symmetric matrix that is not
positive semidefinite has a vector of negative quadratic form.
\end{proof}

The corollary is a statement about the landscape. It supplies no uniform
negative-curvature bound, iteration complexity, or convergence guarantee
for a particular optimization method. Translation and rotation
invariance can still produce zero Hessian eigenvalues.

\section{Stress and the full Hessian}\label{sec:stress}

The cases $n=0$ and $n=1$ have identically zero objective. For the rest
of the proof, suppose $n\ge2$. Independent translations of $P$ and $Z$
preserve all pairwise distances. We may therefore work with
\begin{equation}\label{eq:centered}
 \one^\top P=0,\qquad \one^\top Z=0.
\end{equation}
This choice of representatives does not restrict the perturbations in
\eqref{eq:sosp}: the Hessian remains the Hessian in the full coordinate
space. Define
\[
 J=I_n-\frac1n\one\one^\top,\qquad
 G=PP^\top,\qquad Y=ZZ^\top,\qquad C=Z^\top Z.
\]
Let $R$ denote the symmetric residual matrix,
\begin{equation}\label{eq:residual}
 R_{ij}=\norm{z_i-z_j}^2-\norm{p_i-p_j}^2,
 \qquad R_{ii}=0,
\end{equation}
and define the symmetric stress matrix
\begin{equation}\label{eq:stress}
 S=\frac12\bigl(R-\Diag(R\one)\bigr),
 \qquad s_i=S_{ii},
 \qquad D=J\Diag(s)J.
\end{equation}
Here $\diag(A)$ is the vector of diagonal entries of a square matrix,
whereas $\Diag(s)$ is the diagonal matrix with diagonal $s$.
In particular,
\begin{equation}\label{eq:rows}
 S\one=0,\qquad S_{ij}=\frac12R_{ij}\quad(i\ne j),
 \qquad s_i=-\frac12\sum_jR_{ij}.
\end{equation}

\begin{lemma}[Centered Gram identity]\label{lem:gram}
For centered configurations,
\begin{equation}\label{eq:gram}
 G=Y+S-D.
\end{equation}
For every $u\perp\one$,
\begin{equation}\label{eq:diagquadratic}
 u^\top Du=\sum_i s_i u_i^2.
\end{equation}
\end{lemma}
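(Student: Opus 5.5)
The plan is to compute $R$ entrywise in terms of Gram matrices, and then double-center. Write $g=\diag(G)$ and $y=\diag(Y)$. Then
\[
 \norm{z_i-z_j}^2-\norm{p_i-p_j}^2=(y_i+y_j-2Y_{ij})-(g_i+g_j-2G_{ij}),
\]
so with $h=y-g$ we obtain, as an identity of matrices including the diagonal (where both sides vanish),
\[
 R=h\one^\top+\one h^\top-2(Y-G).
\]

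\textbf{Double-centering.} Multiply the identity for $R$ on both sides by $J$. Because $J\one=0$, the rank-one terms vanish and $JRJ=-2J(Y-G)J$. The centering \eqref{eq:centered} gives $JP=P$ and $JZ=Z$, hence $JGJ=G$ and $JYJ=Y$. Therefore
\[
 G-Y=\tfrac12 JRJ.
\]

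\textbf{Expressing $JRJ$ through $S$.} By \eqref{eq:stress}, $R=2S+\Diag(R\one)$. By \eqref{eq:rows}, $R\one=-2s$, so $\Diag(R\one)=-2\Diag(s)$ and
\[
 R=2S-2\Diag(s).
\]
Since $S\one=0$ and $S$ is symmetric, we have $JS=S=SJ$, so $JSJ=S$. It follows that $\tfrac12JRJ=S-J\Diag(s)J=S-D$, which together with the previous step gives \eqref{eq:gram}.

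\textbf{The quadratic form \eqref{eq:diagquadratic}.} For $u\perp\one$ we have $Ju=u$. Hence $u^\top Du=(Ju)^\top\Diag(s)(Ju)=u^\top\Diag(s)u=\sum_i s_iu_i^2$.

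None of these steps is a real obstacle. The only point needing care is the bookkeeping of the diagonal entries: one must check that the formula for $R$ correctly gives $R_{ii}=0$ (it does, since $2h_i-2(y_i-g_i)=0$), and that the sign convention $s_i=-\tfrac12\sum_jR_{ij}$ matches $\Diag(R\one)=-2\Diag(s)$.
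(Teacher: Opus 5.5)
Your proposal is correct and follows the paper's proof essentially step for step. You expand $R=\diag(E)\one^\top+\one\diag(E)^\top-2E$ with $E=Y-G$, double-center to get $JRJ=-2E$, rewrite $R=2S-2\Diag(s)$ so that $JRJ=2S-2D$, and use $Ju=u$ for the quadratic form, merely spelling out details such as $JSJ=S$ and the diagonal bookkeeping that the paper leaves implicit.
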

\begin{proof}
Set $E=Y-G$. Expanding squared distances gives
\[
 R=\diag(E)\one^\top+\one\diag(E)^\top-2E.
\]
Since $E$ is centered, $JRJ=-2E$. On the other hand,
\eqref{eq:stress} gives $R=2S-2\Diag(s)$; hence
$JRJ=2S-2D$ by \eqref{eq:rows}. Combining the two identities proves
\eqref{eq:gram}. Equation~\eqref{eq:diagquadratic} follows from $Ju=u$.
\end{proof}

For a perturbation $U$ with rows $u_i^\top$, put
$d_{ij}=z_i-z_j$ and $w_{ij}=u_i-u_j$. The residual on a line is
\[
 R_{ij}(Z+tU)=R_{ij}+2t\,d_{ij}^\top w_{ij}
                    +t^2\norm{w_{ij}}^2.
\]
Differentiating the original objective gives
\begin{align}
 DF_P(Z)[U]
 &=2\sum_{i<j}R_{ij}\,d_{ij}^\top w_{ij},
 \label{eq:firstvariation}\\
 D^2F_P(Z)[U,U]
 &=4\sum_{i<j}(d_{ij}^\top w_{ij})^2
   +2\sum_{i<j}R_{ij}\norm{w_{ij}}^2.
 \label{eq:secondvariation}
\end{align}
Thus
\begin{equation}\label{eq:gradient}
 \nabla F_P(Z)=-4SZ.
\end{equation}
Let
\[
 \mathcal H=\frac14\nabla^2F_P(Z)
\]
and partition it into $k\times k$ blocks indexed by the points.
Expanding \eqref{eq:secondvariation}, using \eqref{eq:centered} and
\eqref{eq:rows}, yields
\begin{align}
 \mathcal H_{ii}&=C+nz_i z_i^\top-s_iI_k,
 \label{eq:diagonalblock}\\
 \mathcal H_{ij}&=-\bigl((z_i-z_j)(z_i-z_j)^\top+S_{ij}I_k\bigr)
 \quad(i\ne j).
 \label{eq:crossblock}
\end{align}
Indeed,
$\sum_j(z_i-z_j)(z_i-z_j)^\top=C+nz_i z_i^\top$,
and $\frac12\sum_jR_{ij}=-s_i$.
At a second-order stationary point we consequently have
\begin{equation}\label{eq:stationaritymatrix}
 SZ=0,\qquad YS=SY=0,\qquad \mathcal H\succeq0.
\end{equation}

We will repeatedly use the elementary implication
\begin{equation}\label{eq:zerocurvature}
 A\succeq0,\quad x^\top Ax=0\quad\Longrightarrow\quad Ax=0.
\end{equation}
For example, it follows by diagonalizing the real symmetric matrix $A$.
The implication concerns the full matrix $A$, including its off-diagonal
blocks.

\section{Stress positivity and a normalized lift}\label{sec:normalization}

\begin{lemma}[Strict diagonal bounds]\label{lem:strictdiag}
Let $A$ be a real symmetric matrix satisfying $A\one=0$.
If $a\ne0$ and $A-aI\succeq0$, then $A_{ii}>a$ for every $i$.
If $a\ne0$ and $aI-A\succeq0$, then $A_{ii}<a$ for every $i$.
\end{lemma}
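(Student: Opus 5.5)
We prove the first claim directly; the second then follows by applying the first to $-A$ with $-a$ in place of $a$. Indeed, $aI-A=(-A)-(-a)I$, $-A$ is symmetric with $(-A)\one=0$, and $-a\ne0$. So assume $A-aI\succeq0$ and $a\ne0$.

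First we show $a<0$, which is where the hypothesis $a\ne0$ enters. Testing the quadratic form on $\one$ gives
\[
 \one^\top(A-aI)\one=\one^\top A\one-an=-an\ge0,
\]
so $a\le0$, and hence $a<0$. (If $n=0$ the statement is vacuous, so we take $n\ge1$.)

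Now fix an index $i$ and let $M=A-aI\succeq0$. Its diagonal entries are nonnegative, so $A_{ii}\ge a$. Suppose, for contradiction, that $A_{ii}=a$. Then $e_i^\top Me_i=0$, and the zero-curvature implication \eqref{eq:zerocurvature} gives $Me_i=0$. This means the $i$th column of $A$ equals $ae_i$. By symmetry, row $i$ of $A$ is also $ae_i^\top$. The $i$th entry of $A\one$ is then
\[
 (A\one)_i=\sum_j A_{ij}=a .
\]
But $A\one=0$, so $a=0$, contradicting $a\ne0$ (indeed $a<0$). Hence $A_{ii}>a$ for every $i$.

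The only delicate point is recognising that \eqref{eq:zerocurvature} must be applied to the full matrix $M$. Only then does a vanishing diagonal entry force the entire row to vanish, and that is what the row-sum condition $A\one=0$ can contradict. The argument also works for $n=1$: there $A\one=0$ forces $A=0$, so $A-aI\succeq0$ requires $a\le0$, and then $A_{11}=0>a$ when $a\ne0$.
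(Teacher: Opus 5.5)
Your proof is correct and follows essentially the same route as the paper: nonnegativity of the diagonal gives $A_{ii}\ge a$, equality together with the zero-curvature implication forces $(A-aI)e_i=0$, and pairing with $\one$ (your row-sum computation via symmetry) yields $a=0$, while the second claim follows by applying the first to $-A$ and $-a$. Your preliminary step showing $a<0$ is valid but not needed, since the contradiction $a=0$ already uses only $a\ne0$.
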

\begin{proof}
In the first case $A_{ii}\ge a$. Equality would give
$e_i^\top(A-aI)e_i=0$, and \eqref{eq:zerocurvature} would imply
$(A-aI)e_i=0$. Taking the inner product with $\one$ yields
$0=a$, because $\one^\top A=0$, a contradiction. Apply the first
assertion to $-A$ and $-a$ for the second assertion.
\end{proof}

\begin{lemma}[Positivity at a first-order point]\label{lem:stresspsd}
For centered configurations, $SZ=0$ implies $S\succeq0$.
\end{lemma}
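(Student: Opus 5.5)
The plan is to argue by contradiction through the smallest eigenvalue of $S$, using the Gram identity of Lemma~\ref{lem:gram} to compare $v^\top Sv$ with $\sum_i s_iv_i^2$ for a bottom eigenvector $v$, and then Lemma~\ref{lem:strictdiag} to bound each $s_i$ strictly from below by that eigenvalue.

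Suppose $S$ is not positive semidefinite. Let $a<0$ be its smallest eigenvalue, and let $v$ be a unit eigenvector with $Sv=av$. First I locate $v$. Since $S$ is symmetric and $S\one=0$ by \eqref{eq:rows}, we have $a\one^\top v=\one^\top Sv=0$, so $v\perp\one$. Likewise, the hypothesis $SZ=0$ gives $aZ^\top v=Z^\top Sv=(SZ)^\top v=0$. Hence $Z^\top v=0$, and therefore $Yv=ZZ^\top v=0$. The point of this step is that a nonzero eigenvalue's eigenvector is orthogonal to both $\one$ and the column space of $Z$.

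Next I evaluate the quadratic form of the Gram identity \eqref{eq:gram} at $v$. This gives
\[
 0\le v^\top Gv=v^\top Yv+v^\top Sv-v^\top Dv
 =0+a-\sum_i s_iv_i^2 .
\]
The last term uses \eqref{eq:diagquadratic}, which applies because $v\perp\one$. The inequality uses $G=PP^\top\succeq0$. Consequently $\sum_i s_iv_i^2\le a$.

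Finally I bound the diagonal entries of $S$. By the choice of $a$ we have $S-aI\succeq0$, together with $S\one=0$ and $a\ne0$. Lemma~\ref{lem:strictdiag} then gives $s_i=S_{ii}>a$ for every $i$. Since $\sum_i v_i^2=1$, this yields $\sum_i s_iv_i^2>a$, contradicting the previous paragraph. Therefore $S\succeq0$.

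The only delicate point is the first one. One must see that $SZ=0$ forces the negative eigenvector to be orthogonal to the columns of $Z$, which kills the $Y$ term. Without this, the identity $G=Y+S-D$ gives no usable sign information. Everything else is a direct combination of Lemma~\ref{lem:gram} and Lemma~\ref{lem:strictdiag}.
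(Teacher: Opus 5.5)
Your proof is correct and follows the paper's own argument essentially step for step. A unit eigenvector $v$ for the negative least eigenvalue $a$ is orthogonal to $\one$ and to the columns of $Z$, so the Gram identity yields $0\le v^\top Gv=a-\sum_i s_iv_i^2$, while Lemma~\ref{lem:strictdiag} gives $s_i>a$ for every $i$, a contradiction; the only difference is that you state the two inequalities separately rather than chaining them into one display.
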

\begin{proof}
Suppose the least eigenvalue $a$ of $S$ is negative, and let $v$ be a
corresponding unit eigenvector. Since $S\one=0$ and $a\ne0$,
$v\perp\one$. The identity $Z^\top S=0$ implies $Z^\top v=0$,
and therefore $Yv=0$. Since $S-aI\succeq0$,
Lemma~\ref{lem:strictdiag} gives $s_i>a$ for every $i$. Now
\eqref{eq:gram} and \eqref{eq:diagquadratic} imply
\[
 0\le v^\top Gv
   =a-\sum_i s_i v_i^2
   <a-a\sum_i v_i^2=0.
\]
This contradiction proves $S\succeq0$.
\end{proof}

\begin{lemma}[Consequences of nonzero stress]\label{lem:nonzero}
Suppose \eqref{eq:stationaritymatrix} holds and $S\ne0$. Set
\begin{equation}\label{eq:maxt}
 t=\max_i s_i,\qquad I=\{i:s_i=t\}.
\end{equation}
Then $t>0$, $Z$ has full column rank, and there exist a scalar
$\mu>t$ and a unit vector $v\in\R^n$ such that
\begin{equation}\label{eq:highvector}
 Sv=\mu v,\qquad \one^\top v=0,\qquad Z^\top v=0.
\end{equation}
\end{lemma}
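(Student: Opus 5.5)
The plan is to prove the three assertions in order, using Lemma~\ref{lem:stresspsd} for positivity, the Hessian blocks \eqref{eq:diagonalblock}--\eqref{eq:crossblock} for the rank statement, and Lemma~\ref{lem:strictdiag} for the strict eigenvalue gap. Throughout I work with centered configurations \eqref{eq:centered}, so that \eqref{eq:stationaritymatrix} is available.

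\emph{Step 1: $t>0$.} Since $SZ=0$, Lemma~\ref{lem:stresspsd} gives $S\succeq0$. Every diagonal entry of a positive semidefinite matrix is nonnegative, so $s_i\ge0$ for all $i$. If all $s_i$ vanished, then $\operatorname{tr}S=0$, and together with $S\succeq0$ this would force $S=0$, which is excluded. Hence some $s_i>0$, and so $t>0$. In particular the largest eigenvalue $\mu:=\lambda_{\max}(S)$ is positive.

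\emph{Step 2: $Z$ has full column rank.} Suppose instead that there is a unit vector $a\in\R^k$ with $Za=0$. Then $Ca=Z^\top Za=0$ and $z_i^\top a=0$ for every $i$. Let $w$ be a unit eigenvector of $S$ for $\mu$, and test $\mathcal H$ against the perturbation $U=wa^\top$, whose $i$-th row is $w_ia^\top$. By \eqref{eq:diagonalblock} and \eqref{eq:crossblock}, the quadratic form is
\[
 \sum_{i,j}w_iw_j\,a^\top\mathcal H_{ij}a
 =-\sum_i s_iw_i^2-\sum_{i\ne j}S_{ij}w_iw_j.
\]
The rank-one terms drop out because $z_i^\top a=0$, and $a^\top Ca=0$. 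The right-hand side equals $-w^\top Sw=-\mu<0$. This contradicts $\mathcal H\succeq0$, so $Z$ has full column rank.

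\emph{Step 3: the vector $v$ with $\mu>t$.} Since $\mu$ is the largest eigenvalue of $S$, we have $\mu I-S\succeq0$. Moreover $S\one=0$ and $\mu\ne0$. The second assertion of Lemma~\ref{lem:strictdiag} therefore gives $s_i<\mu$ for every $i$, hence $t<\mu$. Take $v$ to be a unit eigenvector for $\mu$. Then
\[
 \one^\top v=\mu^{-1}\one^\top Sv=0,
\]
since $S$ is symmetric and $S\one=0$. Likewise
\[
 Z^\top v=\mu^{-1}Z^\top Sv=\mu^{-1}(SZ)^\top v=0,
\]
so $v$ satisfies \eqref{eq:highvector}.

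I expect the only real obstacle to be Step~2, which needs the correct test direction. Separating the kernel direction $a$ of $Z$ from a top eigenvector of $S$ makes the Hessian quadratic form collapse exactly to $-w^\top Sw$. Once that choice is made, the remaining steps follow directly from the earlier lemmas.
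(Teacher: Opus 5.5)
Your proof is correct. Steps 1 and 3 follow the paper's argument exactly: positivity comes from Lemma~\ref{lem:stresspsd}, the gap $\mu>t$ from Lemma~\ref{lem:strictdiag} applied to $\mu I-S$, and the two orthogonality relations from multiplying $Sv=\mu v$ by $\one^\top$ and by $Z^\top$.

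The only divergence is the test direction in Step 2. The paper uses the single-point perturbation $U=e_ia^\top$ with $i\in I$. Then only the diagonal block \eqref{eq:diagonalblock} enters, and the form is $a^\top\mathcal H_{ii}a=-t\norm{a}^2<0$.

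Your computation in fact proves a more general identity. For every $w\in\R^n$ and every $a\in\ker Z$,
\[
\operatorname{vec}(wa^\top)^\top\mathcal H\operatorname{vec}(wa^\top)=-\norm{a}^2\,w^\top Sw ,
\]
so the Hessian restricted to the slice $\{wa^\top\}$ is $-\norm{a}^2S$. The paper's choice is the special case $w=e_i$. This step is therefore not an obstacle that needs a careful choice, as you suggested: any $w$ with $w^\top Sw>0$ works, and the top eigenvector is unnecessary.

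The paper's version is lighter: it avoids the cross blocks and matches the single-row perturbations used later in Lemma~\ref{lem:rowdirection}. Yours shows the underlying mechanism more clearly: a coordinate direction unused by $Z$ converts positive stress into negative curvature.
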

\begin{proof}
By Lemma~\ref{lem:stresspsd}, $S$ is positive semidefinite. A nonzero
positive semidefinite matrix has a positive diagonal entry, so $t>0$.
Choose $i\in I$. If $Za=0$, then $Ca=0$ and $z_i^\top a=0$.
Equation~\eqref{eq:diagonalblock} gives
\[
 0\le a^\top\mathcal H_{ii}a=-t\norm{a}^2,
\]
so $a=0$. Thus $Z$ has full column rank and $C\succ0$.

Let $\mu$ be the largest eigenvalue of $S$. It is positive.
Since $\mu I-S\succeq0$, Lemma~\ref{lem:strictdiag} shows that
$s_i<\mu$ for every $i$, hence $t<\mu$. Choose a unit eigenvector
$v$ for $\mu$. Multiplying $Sv=\mu v$ by $\one^\top$ and by
$Z^\top$ gives the remaining identities in \eqref{eq:highvector}.
\end{proof}

\begin{lemma}[Polar normalization]\label{lem:polar}
Suppose $Z$ is centered and has full column rank. Define
\begin{equation}\label{eq:polar}
 Q=ZC^{-1/2},\qquad C=Z^\top Z\succ0.
\end{equation}
Then
\begin{equation}\label{eq:polaridentities}
 Q^\top Q=I_k,\quad \one^\top Q=0,\quad
 Q^\top YQ=C,\quad Z=QC^{1/2}.
\end{equation}
If $Ca=ta$ with $t>0$, then
\begin{equation}\label{eq:transfer}
 Za=\sqrt t\,Qa.
\end{equation}
If, in addition, $SZ=0$ and $v$ satisfies \eqref{eq:highvector}, then
\begin{equation}\label{eq:orthogonal}
 Q^\top v=0,\qquad SQ=0,\qquad Yv=0.
\end{equation}
\end{lemma}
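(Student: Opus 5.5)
The plan is direct verification, using only the definitions, the fact that $C\succ0$ has a unique positive definite square root $C^{1/2}$ commuting with $C$, and the stationarity identities.

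\textbf{The identities in \eqref{eq:polaridentities}.} These are immediate from the definition of $Q$ and the symmetry of $C^{-1/2}$:
\begin{itemize}
\item[] $Q^\top Q=C^{-1/2}Z^\top ZC^{-1/2}=C^{-1/2}CC^{-1/2}=I_k$;
\item[] $\one^\top Q=(\one^\top Z)C^{-1/2}=0$ by \eqref{eq:centered};
\item[] $Z=QC^{1/2}$ by multiplying the definition of $Q$ on the right by $C^{1/2}$;
\item[] $Q^\top YQ=C^{-1/2}Z^\top ZZ^\top ZC^{-1/2}=C^{-1/2}C^2C^{-1/2}=C$, since powers of $C$ commute.
\end{itemize}

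\textbf{The transfer identity \eqref{eq:transfer}.} If $Ca=ta$ with $t>0$, then $a$ is an eigenvector of the functional calculus. Thus $C^{1/2}a=\sqrt t\,a$, using the spectral decomposition of $C$ and the positive branch of the square root. It follows that $Za=QC^{1/2}a=\sqrt t\,Qa$.

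\textbf{The identities in \eqref{eq:orthogonal}.} Assume also that $SZ=0$ and that $v$ satisfies \eqref{eq:highvector}.
\begin{itemize}
\item[] From $Z^\top v=0$ we get $Q^\top v=C^{-1/2}Z^\top v=0$.
\item[] $SQ=(SZ)C^{-1/2}=0$.
\item[] $Yv=Z(Z^\top v)=0$.
\end{itemize}

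None of these steps is a real obstacle. The only point requiring care is justifying $C^{1/2}a=\sqrt t\,a$ from $Ca=ta$. To do this, write $C=\sum_j\lambda_j q_jq_j^\top$ and $C^{1/2}=\sum_j\sqrt{\lambda_j}\,q_jq_j^\top$. The eigenvector $a$ lies in the span of those $q_j$ with $\lambda_j=t$, and on that span $C^{1/2}$ acts as $\sqrt t$.
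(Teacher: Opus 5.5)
Your proposal is correct and follows the paper's own direct verification; the only cosmetic difference is that you compute $Q^\top YQ$ as $C^{-1/2}C^2C^{-1/2}$, whereas the paper goes through $Q^\top Z=C^{1/2}=Z^\top Q$. Your spectral-decomposition argument for $C^{1/2}a=\sqrt t\,a$ is just a more explicit version of the paper's remark that the positive square root acts by $\sqrt t$ on the $t$-eigenspace of $C$.
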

\begin{proof}
The first, second, and fourth identities in \eqref{eq:polaridentities}
follow from \eqref{eq:polar}. Furthermore,
$Q^\top Z=C^{1/2}=Z^\top Q$, so
$Q^\top YQ=C^{1/2}C^{1/2}=C$.
The positive square root acts by $\sqrt t$ on the $t$-eigenspace of
$C$, proving \eqref{eq:transfer}. The remaining identities follow
from $SZ=0$ and $Z^\top v=0$.
\end{proof}

For $a\in\R^k$ and $c\in\R$, write
\begin{equation}\label{eq:lift}
 u=Qa+cv.
\end{equation}
The identities above give
\begin{align}
 \norm{u}^2&=\norm{a}^2+c^2,\label{eq:liftnorm}\\
 u^\top(Y+S)u&=a^\top Ca+\mu c^2,\label{eq:liftenergy}\\
 \one^\top u&=0.\label{eq:liftcenter}
\end{align}
These are exact identities, with no approximation or spectral gap
assumption on $C$.

\section{Row-wise zero-curvature directions}\label{sec:rowdirections}

\begin{lemma}[A direction at every maximum-diagonal row]\label{lem:rowdirection}
Suppose $k>\ell$, \eqref{eq:stationaritymatrix} holds, and $S\ne0$.
For every $i\in I$ there exists a nonzero $a_i\in\R^k$ satisfying
\begin{equation}\label{eq:rowcertificate}
 Ca_i=ta_i,\qquad z_i^\top a_i=0,
 \qquad z_j^\top a_i=0\quad(j\notin I).
\end{equation}
Moreover, if $U_i=e_i a_i^\top$, then
\begin{equation}\label{eq:fullkernel}
 \mathcal H\operatorname{vec}(U_i)=0,
\end{equation}
where vectorization groups the coordinates by rows.
\end{lemma}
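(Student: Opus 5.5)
The plan is to fix $i\in I$ and find $a_i$ as the $Q$-part of a vector in the lifted family \eqref{eq:lift}. This vector will be chosen to kill the ground-truth Gram energy, and then Lemma~\ref{lem:gram} is used to squeeze it. By Lemma~\ref{lem:nonzero} we have $t>0$, $Z$ of full column rank, and a unit vector $v$ with $\mu>t$ satisfying \eqref{eq:highvector}. So Lemma~\ref{lem:polar} applies, and the identities \eqref{eq:liftnorm}--\eqref{eq:liftcenter} hold for $u=Qa+cv$.

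\emph{Step 1 (rank-nullity).} Consider the linear space of pairs $(a,c)\in\R^k\times\R$, which has dimension $k+1$. Impose the $\ell$ linear conditions $P^\top(Qa+cv)=0$ and the single scalar condition $z_i^\top a=0$. Since $k+1-\ell-1=k-\ell\ge1$, there is a nonzero solution $(a,c)$. Because $Q$ has orthonormal columns and $v\perp Q$, the vector $u=Qa+cv$ is nonzero, by \eqref{eq:liftnorm}.

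\emph{Step 2 (squeeze).} For this $u$ we have $u^\top Gu=\norm{P^\top u}^2=0$. Then \eqref{eq:gram} and \eqref{eq:liftcenter} with \eqref{eq:diagquadratic} give
\[
u^\top(Y+S)u=u^\top Du=\sum_j s_ju_j^2\le t\norm{u}^2 .
\]
On the other side, $\mathcal H_{ii}\succeq0$ from \eqref{eq:diagonalblock} together with $z_i^\top a=0$ gives $a^\top Ca=a^\top(C+nz_iz_i^\top)a\ge t\norm a^2$. Combining these with \eqref{eq:liftenergy} and \eqref{eq:liftnorm} yields
\[
t\norm a^2+\mu c^2\le a^\top Ca+\mu c^2\le t\norm a^2+tc^2 .
\]
Since $\mu>t$, this forces $c=0$ and equality throughout, so $a\ne0$.

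\emph{Step 3 (reading off the equalities).} Equality $a^\top\mathcal H_{ii}a=0$ with $\mathcal H_{ii}\succeq0$ gives $\mathcal H_{ii}a=0$ by \eqref{eq:zerocurvature}. Since $z_i^\top a=0$, this says exactly $Ca=ta$. Equality $\sum_js_ju_j^2=t\norm u^2$ forces $u_j=0$ whenever $s_j<t$, that is, for $j\notin I$. Now $u=Qa=Za/\sqrt t$ by \eqref{eq:transfer}, so $z_j^\top a=0$ for all $j\notin I$. Set $a_i=a$; this gives \eqref{eq:rowcertificate}.

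\emph{Step 4 (full kernel).} For $U_i=e_ia_i^\top$, the quadratic form is $\operatorname{vec}(U_i)^\top\mathcal H\operatorname{vec}(U_i)=a_i^\top\mathcal H_{ii}a_i=0$. Since $\mathcal H\succeq0$, \eqref{eq:zerocurvature} applied to the full matrix gives \eqref{eq:fullkernel}.

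The main obstacle is choosing the right scalar constraint in Step 1. Taking $z_i^\top a=0$ is what lets the diagonal block bound compare $C$ with $tI$. Without it, $a^\top Ca$ could be smaller than $t\norm a^2$, and the squeeze would fail.
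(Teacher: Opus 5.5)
Your proposal is correct and follows the paper's proof essentially step for step. You use the same rank-nullity map $(a,c)\mapsto(P^\top(Qa+cv),z_i^\top a)$ and the same squeeze of $0=u^\top Gu$, which you write as a two-sided inequality rather than the paper's sum of nonnegative terms $q+(\mu-t)c^2+\sum_j(t-s_j)u_j^2$. The only cosmetic difference is that you obtain $Ca=ta$ from $\mathcal H_{ii}a=0$ using the positive semidefinite diagonal block, while the paper reads it off the $i$th block of the full kernel equation $\mathcal H\operatorname{vec}(U)=0$.
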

\begin{proof}
Fix $i\in I$ and use $Q,v,\mu$ from
Lemmas~\ref{lem:nonzero} and~\ref{lem:polar}. Consider the linear map
\begin{equation}\label{eq:linearmap}
 \begin{split}
 L_i:\R^k\times\R&\longrightarrow\R^\ell\times\R,\\
 (a,c)&\longmapsto\bigl(P^\top(Qa+cv),\ z_i^\top a\bigr).
 \end{split}
\end{equation}
Its domain has dimension $k+1>\ell+1$. By rank-nullity, its kernel
contains a nonzero pair $(a,c)$. Thus
\begin{equation}\label{eq:kernelconditions}
 P^\top u=0,
 \qquad z_i^\top a=0,
 \qquad (a,c)\ne(0,0),
 \qquad u=Qa+cv.
\end{equation}

Define $q=a^\top(C-tI_k)a$. Since $i\in I$ and $z_i^\top a=0$,
\eqref{eq:diagonalblock} gives
\begin{equation}\label{eq:localnonneg}
 q=a^\top\mathcal H_{ii}a\ge0.
\end{equation}
The first identity in \eqref{eq:kernelconditions} implies
$u^\top Gu=\norm{P^\top u}^2=0$.
Combining \eqref{eq:gram}, \eqref{eq:diagquadratic}, and
\eqref{eq:liftnorm}--\eqref{eq:liftcenter} gives the decomposition
\begin{equation}\label{eq:keydecomposition}
 \begin{split}
 0=u^\top Gu
 &=a^\top Ca+\mu c^2-\sum_j s_j u_j^2\\
 &=q+(\mu-t)c^2+\sum_j(t-s_j)u_j^2.
 \end{split}
\end{equation}
Each term in the last line is nonnegative, and $\mu-t>0$.
Consequently,
\begin{equation}\label{eq:vanishing}
 c=0,\qquad q=0,\qquad u_j=0\quad(j\notin I).
\end{equation}
The nonzero pair in \eqref{eq:kernelconditions} therefore has $a\ne0$.

Let $U=e_i a^\top$. Its full Hessian quadratic form is
\[
 \operatorname{vec}(U)^\top\mathcal H\operatorname{vec}(U)
 =a^\top\mathcal H_{ii}a=q=0.
\]
Because $\mathcal H\succeq0$, \eqref{eq:zerocurvature} yields
$\mathcal H\operatorname{vec}(U)=0$.
The $i$th block, together with $z_i^\top a=0$, implies
$(C-tI_k)a=0$. By \eqref{eq:transfer} and $c=0$,
\[
 Za=\sqrt t\,Qa=\sqrt t\,u.
\]
The support assertion in \eqref{eq:vanishing} now gives
$z_j^\top a=0$ for every $j\notin I$. Taking $a_i=a$ proves all the
claims.
\end{proof}

\section{Propagation through the off-diagonal Hessian blocks}\label{sec:propagation}

\begin{lemma}[Vanishing of a maximum-diagonal row]\label{lem:propagation}
Under the hypotheses of Lemma~\ref{lem:rowdirection}, every $i\in I$
satisfies
\[
 S_{ij}=0\qquad(j\ne i).
\]
\end{lemma}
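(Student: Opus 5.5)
We use the full kernel identity \eqref{eq:fullkernel} from Lemma~\ref{lem:rowdirection} and read off its off-diagonal blocks. Fix $i\in I$ and the nonzero vector $a=a_i$ satisfying \eqref{eq:rowcertificate}. Since $\mathcal H\operatorname{vec}(U_i)=0$ with $U_i=e_ia^\top$, the $j$th block of this vector equals $\mathcal H_{ji}a$. It must vanish for every $j$. For $j\ne i$, \eqref{eq:crossblock} gives
\[
 0=\mathcal H_{ji}a=-\bigl((z_j-z_i)(z_j-z_i)^\top a+S_{ji}a\bigr).
\]
Because $z_i^\top a=0$, this becomes
\[
 (z_j^\top a)(z_j-z_i)+S_{ij}a=0 \qquad (j\ne i).
\]

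The proof then splits into two cases. If $j\notin I$, then \eqref{eq:rowcertificate} gives $z_j^\top a=0$. The identity reduces to $S_{ij}a=0$, and $a\ne0$ forces $S_{ij}=0$. If $j\in I$ with $j\ne i$, take the inner product of the identity with $a$. Using $z_i^\top a=0$ again, we get
\[
 (z_j^\top a)^2+S_{ij}\norm{a}^2=0.
\]
Hence $S_{ij}=-(z_j^\top a)^2/\norm{a}^2\le0$. This sign alone does not finish the case, so more is needed.

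The main obstacle is ruling out $S_{ij}<0$ for $i,j\in I$. We exploit symmetry: Lemma~\ref{lem:rowdirection} applies equally to $j$, giving a nonzero $a_j$ with $z_j^\top a_j=0$, $Ca_j=ta_j$, and $z_m^\top a_j=0$ for $m\notin I$. Applying the same block identity to $a_j$ at index $i$ gives
\[
 (z_i^\top a_j)(z_i-z_j)+S_{ij}a_j=0.
\]
Suppose $S_{ij}\ne0$. Then $a$ is parallel to $z_j-z_i$ and $a_j$ is parallel to $z_i-z_j$, so $a$ and $a_j$ are parallel. But $z_i^\top a=0$ and $z_j^\top a_j=0$, so $a$ is orthogonal to both $z_i$ and $z_j$, hence to $z_j-z_i$. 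Since $a$ is also parallel to the nonzero vector $z_j-z_i$, we get $a=0$, a contradiction.

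A cleaner route to the same conclusion is to dot the first identity with $z_j-z_i$ and use $a\perp z_i$:
\[
 (z_j^\top a)\norm{z_j-z_i}^2+S_{ij}z_j^\top a=0 .
\]
If $S_{ij}\ne0$, then $z_j^\top a\ne0$ by the squared identity above, and therefore $S_{ij}=-\norm{z_j-z_i}^2$. Dotting the first identity with $a$ gives $(z_j^\top a)^2=-S_{ij}\norm{a}^2=\norm{z_j-z_i}^2\norm{a}^2$. Equality in Cauchy--Schwarz for $z_j^\top a=(z_j-z_i)^\top a$ then makes $a$ parallel to $z_j-z_i$. This contradicts $a\perp z_i, z_j$ unless $a=0$. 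So $S_{ij}=0$ in all cases.
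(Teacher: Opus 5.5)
Your first argument for the case $i,j\in I$ is correct and is essentially the paper's proof. You invoke Lemma~\ref{lem:rowdirection} at $j$ to obtain $a_j$ and use the symmetric block equation. You conclude that $a_i$ and $a_j$ are both nonzero multiples of the nonzero vector $z_j-z_i$, so $a_i\perp z_i,z_j$ and hence $a_i\perp z_j-z_i$. The paper phrases the last step as $z_i^\top d=z_j^\top d=0$, whence $\norm{d}^2=0$. Your preliminary computation $S_{ij}=-(z_j^\top a)^2/\norm{a}^2\le0$ is harmless but not needed.

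The ``cleaner route'' in your last paragraph is not a valid alternative and should be dropped. It uses only $a=a_i$, yet its final step invokes $a\perp z_j$. That orthogonality is not available from $a_i$ alone, and in that very branch you have just shown $z_j^\top a\ne0$. The route actually relies on two relations: $z_i^\top a=0$ and the $j$th block equation. These are consistent with $S_{ij}\ne0$. For instance, $z_i=0$, $z_j=e_1$, $a=e_1$, $S_{ij}=-1$ satisfies both $z_i^\top a=0$ and $\bigl((z_j-z_i)(z_j-z_i)^\top+S_{ij}I_k\bigr)a=0$. This matches everything the route derives ($S_{ij}=-\norm{z_j-z_i}^2$ and $a\parallel z_j-z_i$), with no contradiction. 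The orthogonality $z_j^\top a_i=0$ comes only from the parallelism of $a_i$ with $a_j$ together with $z_j^\top a_j=0$. That second vector is exactly what the paper, and your first route, use.
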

\begin{proof}
For distinct $i,j$, define
\[
 d=z_i-z_j,\qquad M_{ij}=dd^\top+S_{ij}I_k.
\]
The $j$th block of \eqref{eq:fullkernel}, together with
\eqref{eq:crossblock}, gives
\begin{equation}\label{eq:crossnull}
 M_{ij}a_i=0\qquad(i\in I,\ j\ne i).
\end{equation}

Fix $i\in I$. If $j\notin I$, then \eqref{eq:rowcertificate}
implies $d^\top a_i=0$. Equation~\eqref{eq:crossnull} becomes
$S_{ij}a_i=0$. Since $a_i\ne0$, we obtain $S_{ij}=0$.

Suppose instead that $j\in I$ and $j\ne i$. Applying
\eqref{eq:crossnull} also at row $j$ gives
$M_{ij}a_i=M_{ij}a_j=0$; symmetry and
$(z_j-z_i)(z_j-z_i)^\top=dd^\top$ justify using the same matrix.
Assume $S_{ij}\ne0$. The equation
\[
 S_{ij}a_i=-d(d^\top a_i)
\]
then makes the nonzero vector $a_i$ parallel to the nonzero vector $d$.
The same reasoning applies to $a_j$. Since
$z_i^\top a_i=z_j^\top a_j=0$, it follows that
$z_i^\top d=z_j^\top d=0$. Thus
\[
 \norm{d}^2=(z_i-z_j)^\top d=0,
\]
contradicting $d\ne0$. Hence $S_{ij}=0$ in this case as well.
\end{proof}

\begin{proof}[Proof of Theorem~\ref{thm:main}]
The zero- and one-point cases are immediate. Center $P$ and $Z$ as
in \eqref{eq:centered}. At a second-order stationary point,
\eqref{eq:stationaritymatrix} holds, and
Lemma~\ref{lem:stresspsd} gives $S\succeq0$.
If $S=0$, \eqref{eq:rows} gives $R_{ij}=0$ for every pair, proving
the result.

Otherwise, $t>0$ and $I$ is nonempty by Lemma~\ref{lem:nonzero}.
Choose $i\in I$. Lemma~\ref{lem:propagation} and $S\one=0$ give
\[
 t=S_{ii}=-\sum_{j\ne i}S_{ij}=0,
\]
a contradiction. Therefore $S=0$ and $F_P(Z)=0$.
Undoing the centering does not change the objective or the distances.
Finally, $F_P$ is nonnegative everywhere, so a zero-residual point is a
global minimizer.
\end{proof}

\begin{remark}\label{rem:fullhessian}
The diagonal blocks \eqref{eq:diagonalblock} provide the nonnegative
term $q$ in \eqref{eq:keydecomposition}. They do not by themselves
suffice for the conclusion. The proof also uses that zero curvature of
a single-point perturbation forces that perturbation into the kernel of
the \emph{full} Hessian. The resulting off-diagonal equations
\eqref{eq:crossnull} are essential.
\end{remark}

\section{Formal statement and verification}\label{sec:formal}

The Lean source, verification records, and exact arithmetic scripts are
included as ancillary files with this paper. The Lean project proves
\nolinkurl{SStress.mainStatement_proved}. Its type is
\nolinkurl{SStress.MainStatement}, which quantifies over all natural numbers
$n,k,\ell$, all real configuration matrices $P,Z$, and the sole dimension
inequality $\ell<k$. The conclusion is zero objective at every
second-order critical point. Neither genericity nor a rank condition is
an extra formal hypothesis.

\subsection{Agreement with the mathematical problem}

The formal objective is
\[
 \frac14\sum_{i=1}^n\sum_{j=1}^n
 \left(\norm{z_i-z_j}^2-\norm{p_i-p_j}^2\right)^2.
\]
The theorem \nolinkurl{objective_eq_half_sum_lt} proves its equality to
\eqref{eq:objective}. The theorem
\nolinkurl{sqDist_eq_euclidean_norm_sq} identifies the implemented
coordinate dot product with the squared Euclidean norm.

The predicate \texttt{SecondOrderCritical} requires, for every real
perturbation matrix $U$,
\[
 \left.\frac{d}{dt}F_P(Z+tU)\right|_{t=0}=0,
 \qquad
 \left.\frac{d^2}{dt^2}F_P(Z+tU)\right|_{t=0}\ge0.
\]
The derivatives are Lean's actual real derivatives of the implemented
objective. The development proves that both derivatives exist at every
point of every such line; it does not rely on a default derivative value
at a nondifferentiable point. The gradient-stress and Hessian identities
are then derived from the quartic polynomial. Since the objective is a
polynomial, these directional conditions are precisely the usual
first- and second-order stationarity conditions used in
\eqref{eq:sosp}.

The final development also proves exact agreement of every measured
squared distance and global minimality in the separate corollaries
\begin{center}
\begin{tabular}{c}
\nolinkurl{exact_distances_of_secondOrderCritical}\\
\nolinkurl{global_minimum_of_secondOrderCritical}.
\end{tabular}
\end{center}
The strict-saddle consequence in Corollary~\ref{cor:landscape} is stated
here as an immediate mathematical consequence of the main theorem;
it is not represented as a separately named formal theorem in the archive.

\subsection{Proof organization}

\begin{center}
\small
\begin{tabular}{@{}>{\raggedright\arraybackslash}p{0.35\linewidth}>{\raggedright\arraybackslash}p{0.57\linewidth}@{}}
\toprule
Lean module & Mathematical content \\
\midrule
\texttt{Definitions}, \texttt{ObjectiveSemantics}
 & Objective, Euclidean distances, criticality, and zero-residual semantics \\
\texttt{Algebra}, \texttt{Derivatives}
 & Centering, Gram identity, and derivatives of the original quartic \\
\texttt{StressPositive}
 & Stress positivity and the high stress eigenvector \\
\texttt{PolarNormalization}
 & Positive square root and the identities in Lemma~\ref{lem:polar} \\
\texttt{RowDirectionsBasic}, \texttt{RowDirections}
 & Rank-nullity, nonnegative decomposition, and row certificates \\
\texttt{KernelPropagation}
 & Zero-curvature propagation and the stress-row contradiction \\
\texttt{Main}, \texttt{Audit}
 & Universal theorem, semantic corollaries, and transitive axiom audit \\
\bottomrule
\end{tabular}
\end{center}

\subsection{Recorded kernel check and reproducibility}

The development is pinned to Lean~4.19.0 and mathlib commit
\begin{center}
\texttt{c44e0c8ee63ca166450922a373c7409c5d26b00b}.
\end{center}
The dependency manifest fixes the transitive package revisions.
The recorded verification run on 21 September 2026 completed both of the
following commands with exit status zero:
\begin{verbatim}
lake build SStress.Audit
lake env lean --trust=0 SStress/Audit.lean
\end{verbatim}
The audit checks an explicit, expanded type for the public theorem and
prints the objective and criticality definitions for inspection. It also
collects the transitive axioms of every theorem declaration in the
\texttt{SStress} namespace. The recorded run checked 168 theorem declarations,
including helper declarations, and allowed only
\begin{center}
\texttt{propext},\qquad
\texttt{Classical.choice},\qquad
\texttt{Quot.sound}.
\end{center}
No problem-specific axiom, proof placeholder, or native-evaluation axiom
is used. These are the standard propositional extensionality, classical
choice, and quotient soundness axioms of the underlying Lean development.

From the ancillary directory \nolinkurl{anc/lean4/}, after installing
the pinned Lean toolchain and obtaining the manifest-pinned dependencies,
the verification can be repeated with
\begin{verbatim}
python3 scripts/verify_release.py
\end{verbatim}
The script checks dependency revisions, scans the project source, runs
the build and zero-trust kernel check, audits axioms, and records source
hashes, command logs, and exit statuses. The archived JSON summary has
\nolinkurl{overall_status} equal to \texttt{PASSED}.
Kernel verification establishes the encoded theorem relative to the
stated foundational axioms and the Lean implementation. The semantic
identifications above make explicit which mathematical problem is
encoded; they also make the scope of the formal claim independently
reviewable.

\section{Exact arithmetic checks}\label{sec:checks}

The supplementary script \nolinkurl{sstress_proof_identity_checks.py}
uses Python's standard-library rational arithmetic. It checks the
centered Gram identity, the gradient, all Hessian blocks, and first and
second directional derivatives obtained independently by expanding the
edge quartics. These finite checks are supplementary tests of formulas
and control configurations. They are not used to prove the universal
theorem.

One control has $n=7$ and $k=\ell=2$. Take
\[
 P=\begin{pmatrix}
 -2&0\\-1&0\\0&0\\1&0\\2&0\\0&1\\0&-1
 \end{pmatrix},
 \qquad
 Z=\begin{pmatrix}
 -2&0\\-1&0\\0&0\\1&0\\2&0\\0&1\\0&1
 \end{pmatrix}.
\]
The script centers both configurations and verifies
$F_P(Z)=8$, $\nabla F_P(Z)=0$, and
$\nabla^2F_P(Z)\succeq0$. The last assertion is certified by a
positive-definite $11\times11$ principal block with an exact $LDL^\top$
factorization and a zero complementary Schur complement. Thus a
positive-objective second-order stationary point is possible when the
strict dimension inequality is removed.

A second control has $n=5$, $\ell=1$, and $k=2$, with
\[
 P=\sqrt{13}\begin{pmatrix}1\\1\\-1\\-1\\0\end{pmatrix},
 \qquad
 Z=\begin{pmatrix}3&0\\-3&0\\0&3\\0&-3\\0&0\end{pmatrix}.
\]
Its true Gram matrix has rational entries, so every calculation is
performed exactly. The verified values are
\[
 C=18I_2,\qquad
 (s_1,s_2,s_3,s_4,s_5)=(18,18,18,18,8),\qquad
 F_P(Z)=3640.
\]
The gradient vanishes and every single-point diagonal Hessian block is
positive semidefinite. Nevertheless, the perturbation
\[
 U=\begin{pmatrix}0&1\\0&1\\0&0\\0&0\\0&0\end{pmatrix}
\]
has
\[
 D^2F_P(Z)[U,U]=-144.
\]
This control verifies the distinction in
Remark~\ref{rem:fullhessian}: positive semidefiniteness of the diagonal
blocks does not replace positive semidefiniteness of the full Hessian.

\Needspace{8\baselineskip}
\section{Scope and further questions}\label{sec:scope}

Theorem~\ref{thm:main} establishes the conjectured qualitative landscape
for the complete, exact, equal-weight objective. The proof uses the
centered identity \eqref{eq:gram} and the common covariance term in every
block \eqref{eq:diagonalblock}. Missing edges or unequal weights change
these structures. The theorem therefore makes no assertion for those
measurement models, for anchor constraints, or for noisy data.

A quantitative extension would require information beyond the existence
of a negative-curvature direction. In particular,
\eqref{eq:keydecomposition} exploits exact vanishing, and the propagation
step uses the exact kernel implication \eqref{eq:zerocurvature}.
Obtaining useful error bounds for approximate stationarity, or stability
under perturbations of the distances, remains a separate problem.

\end{document}